\documentclass[11pt,bezier]{article}
\usepackage{amsmath, amsthm, amssymb, amsfonts, graphicx,tikz}

\textwidth = 16 cm \textheight = 23 cm \oddsidemargin =2.5 mm
\evensidemargin = 0 cm \topmargin = -1.5 cm
\parskip = 1.5 mm
%%%%%%%%%%%%%%%%%%%%%%%%
\newtheorem{thm}{Theorem}
 \newtheorem{conj}[thm]{Conjecture}
 \newtheorem{lem}[thm]{Lemma}
 \theoremstyle{definition}
 
%%%%%%%%%%%%%%%%%%%%%%%%
\newcommand{\al}{\alpha}
\newcommand{\cpg}{{\rm CPG}}

\newcommand{\va}{\vartheta}
\renewcommand{\thefootnote}
%%%%%%%%%%%%%%%%%%%%%%%%

\title{Hamiltonicity of a coprime graph}

\author{ M. H. Bani Mostafa A.$^{\,\rm a}$ \qquad  Ebrahim Ghorbani$^{\,\rm a,b,}$\thanks{Corresponding author} \\[.4cm]
{\sl $^{\rm a}$Department of Mathematics, K. N. Toosi University of Technology,}\\
{\sl P. O. Box 16765-3381, Tehran, Iran}\\
{\sl $^{\rm b}$University of Hamburg, Department of Mathematics, Bundesstraße 55 (Geomatikum),}\\
{\sl 20146 Hamburg, Germany }}

%\date{}

\begin{document}
\maketitle
\footnotetext{{\em E-mail Addresses}: {\tt m.bani.mostafa@ut.ac.ir, ebrahim.ghorbani@uni-hamburg.de} }

\vspace{5mm}

\begin{abstract}

The  $k$-coprime graph of order $n$ is the graph with vertex set  $\{k, k+1, \ldots, k+n-1\}$ in which two vertices  are adjacent if and only if they are coprime.   We characterize Hamiltonian $k$-coprime graphs. As a particular case, two conjectures by Tout, Dabboucy, Howalla (1982) and by Schroeder (2019) on prime labeling of $2$-regular graphs follow. A prime labeling of a graph with $n$ vertices is a labeling of  its vertices with
distinct integers from $\{1, 2,\ldots , n\}$ in such a way that the labels
of any two adjacent vertices are relatively prime.

\vspace{5mm}
\noindent {\bf Keywords:} Prime labeling, Coprime graph, $2$-regular graph \\[.1cm]
\noindent {\bf AMS Mathematics Subject Classification\,(2010):} 05C78, 11B75
\end{abstract}

\vspace{5mm}

\section{Introduction}
Let $G$ be a simple graph with $n$ vertices.
A {\em prime labeling} of $G$ is a labeling of its vertices with
distinct integers from $\{1, 2,\ldots ,n\}$ in such a way that the labels
of any two adjacent vertices are relatively prime. We say that $G$ is {\em prime} if it has a prime labeling.
The {\em coprime graph of integers} (see \cite[Section 7.4]{ps}) has the set of all integers as vertex set where two vertices
are adjacent if and only if they are relatively prime.
So, for an $n$-vertex graph, being prime  is equivalent to being a subgraph of the induced subgraph on $\{1, 2,\ldots ,n\}$  by the coprime graph of integers.
Many properties of the coprime graph of integers including investigating its subgraphs were studied by Ahlswede and Khachatrian \cite{1, 2, 3}, Erd\H{o}s \cite{7, 8, 9}, Erd\H{o}s, S\'ark\"ozy, and Szemer\'edi \cite{10, 12}, Szab\'o and T\'oth \cite{30}, Erd\H{o}s and S\'ark\"ozy \cite{11, 13},
 and S\'ark\"ozy \cite{28}. For a survey on the known results on this subject see \cite{ps}.

The notion of  prime labeling originated with Entringer and was introduced in \cite{t}.
Entringer around 1980 conjectured that all trees are prime. 
Haxell, Pikhurko, and Taraz \cite{h} proved that there is an integer $n_0$ such all trees with at least $n_0$ vertices are prime.
Besides that, several classes of graphs have been shown to be prime, see \cite{g} for more details.

The prime labeling of $r$-regular graphs have been studied so far for $r\le3$.
For $r=3$, i.e., for cubic graphs,  Schroeder \cite{sch} confirmed a conjecture of Schluchter and Wilson \cite{schwil} and classified prime cubic graphs: a cubic graph $G$ is prime if and only if $G$ is bipartite and $G\ne K_{3,3}$.
This result, in particular provides an additional proof that
the {\em ladder graph} (the Cartesian product $P_n\times P_2$) is prime for all $n\ge 1$, as conjectured by Varkey (see \cite{g,v}) and
first proved by Ghorbani and Kamali \cite{ghk}. Classification of prime $2$-regular graphs has remaind open  to date. A $2$-regular graph $G$ must be
a disjoint union of cycles: $G =C_{n_1}\cup\cdots\cup C_{n_m}$, where each $n_i$ is at least $3$. The
following conjecture was first given by Tout, Dabboucy, and Howalla:
\begin{conj}[\cite{t}] \label{conj:2-reg} Let $G =C_{n_1}\cup\cdots\cup C_{n_m}$ be a $2$-regular graph. Then $G$ is prime if and only if
at most one $n_i$ is odd.
\end{conj}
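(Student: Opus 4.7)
The conjecture will follow from the paper's main theorem characterizing Hamiltonian $k$-coprime graphs. I sketch both directions.

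For the \emph{only if} direction, I would use a standard independent-set argument. All even integers in $\{1,\ldots,n\}$ pairwise share the factor $2$ and hence form an independent set in the coprime graph on $\{1,\ldots,n\}$; in any prime labeling of $G$, their pre-images must therefore form an independent set of $G$. Since $\alpha(C_{n_i})=\lfloor n_i/2\rfloor$ and the cycles are vertex-disjoint, $\alpha(G)=\sum_i\lfloor n_i/2\rfloor$, so one needs $\sum_i\lfloor n_i/2\rfloor\ge\lfloor n/2\rfloor$. Combined with $\sum_i n_i=n$, this forces the number of odd $n_i$ to be at most one.

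The \emph{if} direction is the substantial one, and I would prove it by constructing a prime labeling directly. Reorder the cycles so that the sequence of lengths becomes $n_1,n_2,\ldots,n_m$, with the unique odd length (if any) placed in a strategically chosen position, and partition $\{1,2,\ldots,n\}$ into consecutive blocks $I_j=\{s_j,s_j+1,\ldots,s_j+n_j-1\}$ of the prescribed sizes. Provided $(s_j,n_j)$ meets the Hamiltonicity criterion of the main theorem for every $j$, the $s_j$-coprime graph of order $n_j$ contains a Hamiltonian cycle, and the union of these $m$ vertex-disjoint cycles realizes $G$ as a subgraph of the coprime graph on $\{1,\ldots,n\}$, i.e., is the required prime labeling.

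The main obstacle will be choosing the ordering so that the Hamiltonicity criterion holds on every block simultaneously. One expects the criterion to fail on only a short list of small or structurally special intervals, so this should reduce to a finite case analysis. The first block $\{1,\ldots,n_1\}$ is the most flexible, because $1$ is coprime to every integer, and it is the natural home for the odd cycle (when one exists) or for any problematic small even cycle; subsequent blocks can then hopefully be assigned greedily. The delicate step will be to show that the parity budget ``at most one odd $n_i$'' is exactly what is needed to guarantee an admissible ordering, and to handle the corner cases where some $n_j$ is as small as $3$ or $4$ and the Hamiltonicity condition on its block is tight.
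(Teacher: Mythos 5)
Your ``only if'' direction is exactly the paper's argument (the even labels force $\al(G)\ge\lfloor n/2\rfloor$, which fails as soon as two $n_i$ are odd), so that part is fine.

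The ``if'' direction as you propose it has a genuine gap: the strategy of partitioning $\{1,\ldots,n\}$ into consecutive blocks, one per cycle, with each block inducing a Hamiltonian coprime graph, cannot be made to work in general, and no reordering or finite case analysis rescues it. Take $G=C_4\cup C_4\cup C_4$. Every admissible ordering produces the blocks $\{1,2,3,4\}$, $\{5,6,7,8\}$, $\{9,10,11,12\}$, and in the last block the vertex $12$ is adjacent only to $11$ (it shares a factor with each of $9$, $10$), so $\cpg(9,4)$ has a vertex of degree one and is not Hamiltonian; since all three cycles have the same length, there is nothing to permute. This is consistent with the paper's Theorem~\ref{thm:notHamilton}: for even $n$ the block $\{k,\ldots,k+n-1\}$ fails whenever $\va(n)$ divides $k$ or $k+n-1$, and with many small even cycles some block inevitably lands on such a $k$. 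The paper sidesteps this entirely: it invokes Schroeder's prior theorem that any disjoint union of \emph{even} cycles is prime (proved by an interleaved labeling of $\{1,\ldots,n_1+\cdots+n_{m-1}\}$, not by consecutive blocks), and then uses only \emph{one} consecutive block, namely $\{k,\ldots,k+n_m-1\}$ with $k=n_1+\cdots+n_{m-1}+1$ for the single odd cycle; since $k$ and $n_m$ are both odd, Theorem~\ref{thm:n,k-odd} (part (i) of the main theorem) gives the required Hamiltonian cycle. So to repair your argument you must either import Schroeder's result for the even part, as the paper does, or replace the block decomposition for the even cycles by some genuinely non-consecutive labeling.
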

Note that if a graph $G$ with $n$ vertices is prime, the vertices with even labels form
an independent set. Thus its independence number satisfies $\al(G)\ge\lfloor n/2\rfloor$.
For cycles, we have $\al(C_\ell) =\lfloor\ell/2\rfloor$. Consequently, if in $G =C_{n_1}\cup\cdots\cup C_{n_m}$ more than one $n_i$ is odd, then $\al(G)<\lfloor n/2\rfloor$.
  Hence, the necessity
of the condition that ``at most one $n_i$ is odd"  is obvious.
Some partial cases of Conjecture~\ref{conj:2-reg} have been settled in the literature which are reported below:
\begin{itemize}
\item $m\le 4$ (\cite{der});
\item (i) $m\le 7$ provided that all $n_1,\ldots,n_m$ are
even;  (ii) for arbitrary $m$ when $n_1 =\cdots = n_m$ is a sufficiently large even integer (\cite{borosh});
\item (i) if each $n_i$ is even; (ii) if $n_m$ is odd and $\gcd(n_m-1, n) = 1$; (iii)
if $n_m$ is odd and $n_m$ can be written as $2^r + p^s$, for some $r\ge1$ and odd prime $p$ which is
relatively prime to $2^r-1$ (\cite{sch}).
\end{itemize}

Let $n_1,\ldots,n_{m-1}$ be even and $n_m$ be odd. By the above result of \cite{sch}, $C_{n_1}\cup\cdots\cup C_{n_{m-1}}$ is prime.
So to prove Conjecture~\ref{conj:2-reg} it suffices to show that $C_{n_m}$ has a prime labeling with labels $k,\ldots,k+n_m-1$ with $k=n_1+\cdots+n_{m-1}+1$. Note that here both $k$ and $n_m$ are odd.
Therefore, as observed in \cite{sch},  Conjecture~\ref{conj:2-reg} follows from the following conjecture:

\begin{conj}[\cite{sch}]\label{conj:n,k} If  $n,k$ are odd integers and $n\ge3$, then $C_n$ has a prime
labeling using the labels  $k, k + 1, \ldots , k + n - 1$.
\end{conj}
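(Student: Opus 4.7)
The plan is to exhibit a Hamiltonian cycle in the coprime graph $G$ on $V = \{k, k+1, \ldots, k+n-1\}$, since such a cycle is precisely a prime labeling of $C_n$ with the required labels. The crucial starting observation is that consecutive integers are always coprime, so $G$ always contains the natural Hamiltonian path $k \sim (k+1) \sim \cdots \sim (k+n-1)$, and the task reduces to closing this path into a cycle. If $\gcd(k, n-1) = 1$, then $\gcd(k, k+n-1) = 1$, the closing edge $(k+n-1)\sim k$ is present, and the natural cycle $k, k+1, \ldots, k+n-1, k$ is already Hamiltonian. In particular this settles all cases with $n \le 5$, since then $n-1 \in \{2, 4\}$ is a power of $2$ and is coprime to the odd integer $k$.

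Assume now $n \ge 7$ and $\gcd(k, n-1) > 1$. The plan is to apply a \emph{2-opt swap}: for a suitable index $i$ with $2 \le i \le n-2$, reverse the segment $(k+i, k+i+1, \ldots, k+n-1)$, obtaining the candidate cycle
\[
k \sim (k+1) \sim \cdots \sim (k+i-1) \sim (k+n-1) \sim (k+n-2) \sim \cdots \sim (k+i) \sim k.
\]
All internal edges remain between consecutive integers and hence are coprime. The two modified edges are coprime precisely when $\gcd(i, k) = 1$ and $\gcd(n-i, k+n-1) = 1$. Hence the whole construction succeeds as soon as we can produce an index $i \in \{2, \ldots, n-2\}$ meeting both of these conditions.

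This reduces the proof to the number-theoretic lemma: for every odd $k \ge 3$ and every odd $n \ge 7$ with $\gcd(k, n-1) > 1$, such an $i$ exists. Let $K$ and $M$ denote the sets of prime divisors of $k$ and $k+n-1$; both consist of odd primes. For each $p \in K \cap M$ one has $p \mid n-1$, so $n \equiv 1 \pmod p$ and the joint condition on $i$ becomes $i \not\equiv 0, 1 \pmod p$, leaving $p-2 \ge 1$ admissible residues out of $p$; for $p \in K \setminus M$ only $i \equiv 0 \pmod p$ is forbidden, and for $q \in M \setminus K$ only $i \equiv n \pmod q$. By the Chinese remainder theorem the admissible $i$ form a union of residue classes of positive density modulo $\prod_{p \in K \cup M} p$, and since primes in $K \cup M$ exceeding $n-2$ impose no restriction on $i$ in our range, an inclusion-exclusion count should produce at least one admissible $i$ in $\{2, \ldots, n-2\}$. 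The principal obstacle in the proof is precisely this counting step when $k$ and $k+n-1$ share several small prime factors, so that many residues modulo small primes must be simultaneously avoided; if a single 2-opt swap cannot be guaranteed in every residual case, we anticipate falling back on a 3-opt swap (simultaneously reversing two disjoint segments of the natural path) or on the existence of a large prime in $V$ supplied by Bertrand's postulate, to be used as a flexible pivot.
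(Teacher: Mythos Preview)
Your proposal is a reasonable plan but is not a proof: the central number-theoretic step --- the existence of an admissible index $i\in\{2,\ldots,n-2\}$ for the 2-opt swap --- is only asserted heuristically (``an inclusion--exclusion count \emph{should} produce\ldots''), and you explicitly anticipate that it may fail in some residual cases, deferring to an unspecified 3-opt or Bertrand-pivot argument. Neither the sieve count nor the fallback is actually carried out, so as written the argument has a genuine gap precisely at its hardest point. The difficulty is real: when $k$ and $k+n-1$ are simultaneously divisible by many of the odd primes below $n$, the density of admissible $i$ modulo $\prod p$ can be much smaller than $1/(n-3)$, and a naive inclusion--exclusion on the short interval $\{2,\ldots,n-2\}$ does not obviously yield a positive count.

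The paper's proof (Theorem~\ref{thm:n,k-odd}) takes a different route that sidesteps this difficulty. Rather than a single 2-opt, it runs a greedy multi-rotation scheme: starting from $a_0=k$, it iteratively picks odd $a_i>a_{i-1}$ with $\gcd(a_{i-1},a_i+1)=1$, and Lemma~\ref{lem:edges} (a $k$-edge generalization of your 2-opt observation) converts the resulting chain of chords into a Hamiltonian cycle as soon as the last $a_m$ is coprime to $k'=k+n-1$. When the greedy process gets stuck with $\gcd(a_m,k')>1$, the paper exploits the stuckness itself: maximality of $m$ forces every odd prime below the remaining gap $k'-a_m$ to divide $a_m$. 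For gaps $\ge33$ this makes each of $a_m+2,a_m+4,a_m+8,a_m+16,a_m+32$ coprime to every integer in $\{a_m+1,\ldots,k'\}$, and an explicit Hamiltonian path on the tail is written down by hand. For gaps $\le32$ the paper invokes the representation $k'-a_m+1=2^r+p^s$ with $p\nmid2^r-1$ (Lemma~\ref{lem:2^r+p^s}) to produce one more chord. Thus where you hope a sieve will clear the obstruction, the paper converts the obstruction into structural information about $a_m$ and finishes constructively.
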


Motivated by Conjecture~\ref{conj:n,k}, we define the {\em $k$-coprime graph} of order $n$, denoted $\cpg(k,n)$ as the graph with vertex set  $\{k, k+1, \ldots, k+n-1\}$ in which two vertices $a,b$ are adjacent if and only if they are coprime, i.e. $\gcd(a,b)=1$.
Here $n$ can be any positive integer and $k$ any integer. If $0$ happen to be a vertex of our graph, it has at most two neighbors, namely $-1$ and $1$, because
for every nonzero integer $a$, $\gcd(a,0)=|a|$.

As the main result of this paper, we characterize  Hamiltonian $k$-coprime graphs as follows.
\begin{thm}\label{thm:main} Let $k$ and $n\ge3$ be integers. Then $\cpg(k,n)$ is Hamiltonian if and only if either
\begin{itemize}
  \item[\rm(i)]  both $n$ and $k$ are odd, or
  \item[\rm(ii)] $n$ is even and  each of $k$ and $k+n-1$ is not divisible by some odd prime less than $n$.
\end{itemize}
\end{thm}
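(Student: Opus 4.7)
Necessity follows from independence-number considerations. The even elements of $V:=\{k,\ldots,k+n-1\}$ form an independent set of $\cpg(k,n)$ (any two share the factor~$2$), and a Hamiltonian graph on $n$ vertices has independence number at most $\lfloor n/2\rfloor$. If $n$ is odd and $k$ is even there are $(n+1)/2$ evens in $V$, a contradiction, proving $k$ must be odd in~(i). When $n$ is even, evens and odds both have size $n/2$, forcing every Hamilton cycle to alternate between them (the only independent sets of size $n/2$ in $C_n$ are the two bipartition classes). Suppose every odd prime $p<n$ divides $k$. Then for any odd $j$ with $3\le j\le n-1$, $j$ has an odd prime factor $p\le j<n$ dividing $k$, giving $\gcd(k,k+j)\ge p>1$; hence $k+1$ is the only vertex of opposite parity adjacent to $k$, contradicting that $k$ needs two such neighbors on a cycle. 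The symmetric argument at $k+n-1$ gives the necessity of~(ii).

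Sufficiency is the main work. My plan is to construct the Hamilton cycle explicitly, starting from the trivial Hamilton path $k,k+1,\ldots,k+n-1$ (consecutive integers are coprime) and modifying it. In case~(i) both endpoints are odd, and the goal is to reach a rotated Hamilton path with coprime endpoints $u,v$; P\'osa-style rotations should work because any two consecutive odd (or consecutive even) integers in $V$ are automatically coprime, providing ample rotation edges. In case~(ii) the cycle must be alternating, so I would work inside the bipartite subgraph $B\subset\cpg(k,n)$ between the $n/2$ evens and $n/2$ odds of $V$; condition~(ii) guarantees that the endpoints $k$ and $k+n-1$ each have at least two neighbors in $B$, which is exactly what is needed for a cycle to pass through them.

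The principal auxiliary tool I would use is a \emph{universal prime}: any prime $p\in V$ with $p\ge n$ (and $0\notin V$) is coprime to every other element of $V$, since the next multiple of $p$, namely $2p$, already exceeds $k+n-1$. Such a $p$ can be spliced into any Hamilton path of $V\setminus\{p\}$, and Bertrand's postulate (and sharper interval-of-primes results for small $n$) supplies such a $p$ whenever $k$ is not very small, reducing the problem by induction to the shorter shifted interval $V\setminus\{p\}$. The main obstacle I foresee is the low-$k$, low-$n$ base regime, where $V$ contains no universal prime and many of its elements share small primes; this probably has to be handled by finite case analysis. A subtler issue in case~(ii) is preserving bipartite alternation under rotations, which generally can flip endpoint parities; I expect this to require paired (``double'') rotations or a bipartite-restricted rotation lemma so that the current Hamilton path always has one endpoint in each parity class, and also a verification that the reduced interval obtained after removing $p$ still satisfies an analogue of~(ii) so the induction propagates.
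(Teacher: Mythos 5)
Your necessity argument is fine and matches the paper's (independence number for odd $n$, and for even $n$ the forced parity-alternation plus the observation that an endpoint divisible by every odd prime below $n$ has only one opposite-parity neighbour). The sufficiency half, however, is only a plan, and its central tool is unsound. A ``universal prime'' $p\in\{k,\dots,k+n-1\}$ with $p\ge n$ exists, by Bertrand, only when $k\lesssim n$; for fixed $n$ and large $k$ the interval $\{k,\dots,k+n-1\}$ has bounded length while prime gaps are unbounded, so for infinitely many $k$ it contains no prime at all. (Your quantifier is in fact reversed: it is for \emph{small} $k$ that Bertrand helps.) So the proposed induction has no way to proceed for most pairs $(k,n)$. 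There are further problems even where a universal prime exists: $V\setminus\{p\}$ is not an interval when $p$ is interior, so it is not an instance of the statement being inducted on; and for even $n$ the set $V\setminus\{p\}$ has $n/2$ evens among $n-1$ vertices, hence is never Hamiltonian, so ``splicing $p$ into a Hamilton path of $V\setminus\{p\}$'' needs a path whose existence you have not addressed. The paper's substitute for all of this is the periodicity lemma: for even $n$, translating by $\vartheta(n)$ (the product of the odd primes below $n$) preserves Hamiltonicity, which reduces each $n$ to finitely many $k$ and enables an induction on $n$ driven by the nearest coprime neighbours $\ell=k+\bar p$ and $\ell'=k'-\bar p'$ of the two endpoints.

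For case (i), ``P\'osa-style rotations should work'' is not a proof, and the difficulty you are waving away is precisely where the paper's effort goes. The paper's rotation device (Lemma~\ref{lem:edges}) needs a specific chain of chords $v_1v_{i_1+1}, v_{i_1}v_{i_2+1},\dots,v_{i_k}v_n$; it builds this chain greedily through odd vertices $a_i$ with $\gcd(a_i+1,a_{i-1})=1$, and the greedy process can stall at some $a_m$ with $\gcd(a_m,k+n-1)>1$. Rescuing it requires Schroeder's $2^r+p^s$ decomposition lemma when $k'-a_m\le 32$, and a separate construction through $a+2,a+4,a+8,a+16,a+32$ when $k'-a_m\ge 33$. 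Coprimality of consecutive odd integers does not by itself supply the rotation edges needed to close the cycle. In short: the necessity direction is complete, but the sufficiency direction is missing its two key ideas (the $\vartheta(n)$-periodicity reduction and the stall-repair for the chord chain), and the universal-prime induction cannot be repaired as stated.
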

Obviously, from Part (i) of Theorem~\ref{thm:main}, Conjecture~\ref{conj:n,k} and consequently Conjecture~\ref{conj:2-reg} follow.

\section{Proofs}

In this section, we present the proof of Theorem~\ref{thm:main} which is organized as follows: in Theorems~\ref{thm:n,k-odd} and \ref{thm:va(n)not|k} we shall prove that if
$n,k$ satisfy the conditions (i) or (ii) of Theorem~\ref{thm:main}, respectively, then  $\cpg(k,n)$ is Hamiltonian; in Theorem~\ref{thm:notHamilton}, we show that if $n,k$ do not satisfy (i) and (ii), then    $\cpg(k,n)$ is not Hamiltonian.

We start with the following useful lemma.

\begin{lem} \label{lem:edges} Let $G$ be a graph with a Hamiltonian path $v_1,v_2,\ldots, v_n$. If there is a sequence of indices $1<i_1<\cdots<i_k<n$ such that
$G$ contains the edges
\begin{equation}\label{eq:edges}
v_1v_{i_1+1},v_{i_1}v_{i_2+1},\ldots,v_{i_{k-1}}v_ {i_k+1},v_{i_k}v_n,\end{equation}
then $G$ is Hamiltonian.
\end{lem}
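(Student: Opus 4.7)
The plan is to construct an explicit Hamiltonian cycle of $G$ using the given Hamiltonian path together with the $k+1$ extra edges listed in \eqref{eq:edges}. Set $i_0=0$ and $i_{k+1}=n$, and split the Hamiltonian path into $k+1$ consecutive subpaths (``blocks'')
$$B_j \;:\; v_{i_j+1},\, v_{i_j+2},\, \ldots,\, v_{i_{j+1}} \qquad (0\le j\le k).$$
Every extra edge has endpoints at specific block endpoints: the edge $v_1 v_{i_1+1}$ joins the left end of $B_0$ to the left end of $B_1$; for each $1\le j\le k-1$ the edge $v_{i_j} v_{i_{j+1}+1}$ joins the right end of $B_{j-1}$ to the left end of $B_{j+1}$; and $v_{i_k} v_n$ joins the right end of $B_{k-1}$ to the right end of $B_k$.

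The cycle I propose traverses the blocks in a zigzag. Starting at $v_1$, walk through the even-indexed blocks $B_0, B_2, B_4, \ldots$ in order, each in the forward direction, hopping from the right end of one even block to the left end of the next by means of the edges $v_{i_{2j+1}} v_{i_{2j+2}+1}$. Upon finishing the last forward block, use the edge $v_{i_k} v_n$ to move to the top odd-indexed block: if $k$ is odd, $B_k$ itself is the top odd block and one enters it at $v_n$; if $k$ is even, $B_{k-1}$ is the top odd block and one enters it at $v_{i_k}$. Then traverse the remaining odd-indexed blocks $\ldots, B_5, B_3, B_1$ in decreasing order, each backward, with the edges $v_{i_{2j}} v_{i_{2j+1}+1}$ acting as bridges between successive odd blocks. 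The walk ends at $v_{i_1+1}$, and the closing edge $v_1 v_{i_1+1}$ returns to $v_1$. By construction, every vertex of $G$ is visited exactly once and every consecutive pair along the traversal is either a Hamiltonian-path edge internal to a block or one of the edges in \eqref{eq:edges}, so this is a Hamiltonian cycle.

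The only genuine obstacle is keeping the parity of $k$ straight, since the terminal edge $v_{i_k} v_n$ enters the top odd block from different sides in the two cases; the small cases $k=1$ (cycle $v_1, v_2, \ldots, v_{i_1}, v_n, v_{n-1}, \ldots, v_{i_1+1}, v_1$) and $k=2$ (cycle $v_1, \ldots, v_{i_1}, v_{i_2+1}, \ldots, v_n, v_{i_2}, \ldots, v_{i_1+1}, v_1$) illustrate the two patterns. Once the block decomposition is in place, the verification that each transition of the zigzag is realized by an edge of $G$ reduces to a routine index check.
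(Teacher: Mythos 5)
Your proof is correct and follows essentially the same route as the paper: the paper forms the subgraph consisting of the path edges plus the edges in \eqref{eq:edges}, deletes the $k$ edges $v_{i_j}v_{i_j+1}$, and asserts the remainder is a single $n$-cycle, which is exactly the zigzag cycle you trace explicitly. Your version has the minor merit of verifying the connectivity (single cycle, not a union of cycles) that the paper leaves implicit.
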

\begin{proof}{Consider the induced subgraph of $G$ by the edges of the path $v_1,v_2,\ldots, v_n$ together with the edges given in \eqref{eq:edges}.
If we remove the edges $v_{i_1}v_{i_1+1},v_{i_2}v_{i_2+1},\ldots,v_{i_k}v_{i_k+1}$ from this subgraph, what is left is a cycle with $n$ edges, and so we are done.
}\end{proof}
In what follows, we frequently  use the fact that for any distinct nonzero integer $a,b$,
\begin{equation}\label{eq:gcd}
\gcd(a,b)=\gcd(a,b-a).
\end{equation}

\begin{lem}[\cite{sch}]\label{lem:2^r+p^s} Any odd integer greater than $1$ and less than $149$ can be written as $2^r+p^s$ where $r\ge1$, $s\ge0$, and $p$ is an odd prime with $p\nmid2^r-1$.
\end{lem}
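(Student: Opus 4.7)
The claim concerns only the 73 odd integers $n$ in the range $3 \le n \le 147$, so the argument will be a finite verification. My plan is to exhibit, for each such $n$, an explicit triple $(r, p, s)$ satisfying the conditions.

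As a preliminary step I would dispose of the Fermat-like cases $n = 2^r + 1$, i.e., $n \in \{3, 5, 9, 17, 33, 65, 129\}$, by choosing $s = 0$ and letting $p$ be any odd prime that does not appear among the (few) prime factors of $2^r - 1$. Such a $p$ clearly exists since $2^r - 1 < 2^8$ has at most a handful of prime divisors.

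For the remaining values of $n$, the approach is algorithmic. For each $r$ with $1 \le r \le \lfloor \log_2(n-1) \rfloor$, set $m := n - 2^r$ and test whether $m$ is a power of an odd prime $p$ with $p \nmid 2^r - 1$. Because $n < 149$, at most seven values of $r$ need be tried for each $n$, and every candidate $m$ is below $147$, so its factorization is immediate. Enumerating all $(n, r)$ pairs and recording one working decomposition per $n$ completes the verification; this is short enough to be carried out by hand or by a trivial program.

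The main obstacle is that a few $n$ admit only decompositions with $s \ge 2$, so one must not restrict the search to the Polignac form $2^r + p$. The prototypical case is $n = 127$: the residues $127 - 2^r$ for $r \in \{2, 3, 4, 5, 6\}$ are $123, 119, 111, 95, 63$, each having two distinct prime factors, while $127 - 2 = 125 = 5^3$ and $5 \nmid 2^1 - 1 = 1$ yield the sole valid decomposition $127 = 2^1 + 5^3$. A handful of analogous ``tight'' cases must be inspected individually. The cutoff $149$ is essentially forced: one checks that $149 - 2^r$ is not a prime power for any admissible $r$, so the lemma cannot be extended in this form beyond $147$.
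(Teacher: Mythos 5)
Your proposal is correct: the statement is a finite assertion about the 73 odd integers in $[3,147]$, and an exhaustive search over $r$ with a prime-power test on $n-2^r$ (plus the $s=0$ escape for $n=2^r+1$ and the $s\ge2$ case $127=2+5^3$) is exactly the kind of verification behind the result, which this paper does not reprove but simply imports from \cite{sch}. Your observations that $127$ forces $s\ge 2$ and that $149$ admits no valid decomposition (explaining the bound) are both accurate.
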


In the next theorem, we prove that if $n$ and $k$ satisfy the condition (i)  of Theorem~\ref{thm:main},  then  $\cpg(k,n)$ is Hamiltonian.

\begin{thm}\label{thm:n,k-odd} Let $k$ and $n\ge3$ be odd integers. Then $\cpg(k,n)$ is Hamiltonian.
\end{thm}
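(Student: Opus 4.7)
My plan is to start from the identity Hamiltonian path $v_1,v_2,\ldots,v_n$ with $v_i=k+i-1$ (which is a valid path since consecutive integers are coprime) and invoke Lemma~\ref{lem:edges} to close it into a Hamiltonian cycle. By \eqref{eq:gcd}, the edge $v_av_b$ with $a<b$ belongs to $\cpg(k,n)$ iff $\gcd(k+a-1,b-a)=1$. In the trivial case $\gcd(k,n-1)=1$, the identity cycle already closes via the edge $v_1v_n$, so I may assume $\gcd(k,n-1)>1$.

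Next, I would try to apply Lemma~\ref{lem:edges} with a single index $m\in\{2,\ldots,n-1\}$, which requires $\gcd(k,m)=1$ and $\gcd(k+m-1,n-m)=1$. Two useful families of candidates make one of these conditions automatic: (A) $m=2^r$ with $r\ge 1$, for which $\gcd(k,m)=1$ is automatic since $k$ is odd; and (B) $m=n-2^r$ with $r\ge 0$, for which $\gcd(k+m-1,n-m)=1$ is automatic (since $n-m$ equals $1$ or a power of $2$, while $k+m-1$ is coprime to it). In family (B), the remaining condition $\gcd(k,n-2^r)=1$ translates, for each odd prime $p\mid k$, to $2^r\not\equiv n\pmod p$; these bad $r$ are excluded in at most one residue class modulo $\mathrm{ord}_p(2)$. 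A symmetric analysis applies to family (A), where the relevant primes are those dividing $v_n=k+n-1$; here the condition becomes $\gcd(k+2^r-1,n-2^r)=1$, which again forbids one residue class of $r$ modulo $\mathrm{ord}_p(2)$ for each odd prime $p\mid v_n$. An inclusion--exclusion / CRT count over these finitely many primes should produce a good $r$ in at least one of the two families whenever $n$ is sufficiently large relative to them.

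For the remaining small cases, I would invoke Lemma~\ref{lem:2^r+p^s} to write $n=2^r+p^s$ with $\gcd(p,2^r-1)=1$, and carry out a Schroeder-style block construction: partition $\{k,\ldots,k+n-1\}$ into a block of $2^r$ consecutive integers and a block of $p^s$ consecutive integers, take the natural Hamiltonian paths within each block, and glue them via two cross-edges whose coprimality is exactly what the hypothesis $p\nmid 2^r-1$ is designed to guarantee. The main obstacle I anticipate is the interface between these arguments, especially cases where $\gcd(k,n-1)$ has several small odd prime factors yet $n$ is only moderate: there the single-swap argument can just barely fail while the $2^r+p^s$ decomposition requires a careful choice of which block contains which parity class. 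In such borderline cases, the full strength of Lemma~\ref{lem:edges} with multiple indices $i_1<\cdots<i_t$ can be used, each index chosen to sidestep one problematic prime, to patch the construction into a Hamiltonian cycle.
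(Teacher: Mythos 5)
There is a genuine gap, and it sits exactly where you locate the anticipated ``obstacle.'' Your single-rotation mechanism only ever tests the indices $m=2^r$ and $m=n-2^r$, and both tests hinge on the same quantity: family (A) requires $\gcd(k+n-1,\,n-2^r)=1$ (since $\gcd(k+2^r-1,n-2^r)=\gcd(k+n-1,n-2^r)$) and family (B) requires $\gcd(k,\,n-2^r)=1$. Because any prime dividing both $k$ and $k+n-1$ divides $n-1$, both families fail for every $r$ precisely when each $n-2^r$ either has two distinct odd prime factors (one assignable to $k$, one to $k+n-1$) or is a prime power $p^s$ with $p\mid 2^r-1$. In other words, your mechanism is guaranteed to produce some good $r$ for \emph{all} odd $k$ only when $n$ itself admits a decomposition $n=2^r+p^s$ with $p\nmid 2^r-1$ --- which is exactly the content of Lemma~\ref{lem:2^r+p^s}, guaranteed only for $n<149$. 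The number $n=149$ is not of the form $2^r+p^s$ at all ($147=3\cdot7^2$, $145=5\cdot29$, $141=3\cdot47$, $133=7\cdot19$, $117=3^2\cdot13$, $85=5\cdot17$, $21=3\cdot7$), so by CRT one can choose an odd $k$ with $3\cdot5\cdot19\cdot37\mid k$ and $7\cdot13\cdot17\cdot29\cdot47\mid k+148$ for which the identity cycle, family (A), and family (B) all fail simultaneously. Your large-$n$ counting argument does not rescue this: there are only about $\log_2 n$ admissible values of $r$, while the number of forbidden congruence classes can be as large as $\pi(n)$ (e.g., when $\va(n)\mid k$, family (B) fails for \emph{every} $r$ since every odd $1<m<n$ shares a factor with $k$), so no inclusion--exclusion over residues of $r$ can close the argument. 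The final appeal to ``multiple indices to patch the construction'' is where the actual theorem lives, and no mechanism is given for it.

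The paper's key idea, which your proposal is missing, is to use Lemma~\ref{lem:edges} with a \emph{greedily built chain} of odd vertices $a_0=k<a_1<\cdots<a_m$, each chosen with $\gcd(a_i+1,a_{i-1})=1$, rather than a single rotation. This localizes all the difficulty to the tail interval $[a_m,k']$ and, crucially, the maximality of $m$ forces $a_m$ to be divisible by every odd prime less than $k'-a_m$. That strong divisibility is what makes the endgame tractable: if the gap $k'-a_m$ is at most $32$, Lemma~\ref{lem:2^r+p^s} is applied to the \emph{gap} $k'-a_m+1<149$ (not to $n$, so there is no missing range of $n$); if the gap is at least $33$, the five numbers $a_m+2,a_m+4,a_m+8,a_m+16,a_m+32$ are coprime to everything in the tail and serve as universal connectors for an explicit reroute. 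If you want to salvage your approach, you would need to replace the two fixed families of rotation indices by something like this adaptive chain; as written, the proof does not go through.
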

\begin{proof}{Let $G=\cpg(k,n)$ and $k':=k+n-1$. The graph $G$ contains the Hamiltonian path  $k,\ldots, k'$.
We define the sequence $a_0,a_1,\ldots$ as follows. We set $a_0=k$. Assume that $a_{i-1}$ is already defined, we choose $a_i$ in such a way that
$$a_{i-1}< a_i\le k'-1,~~\gcd(a_i+1,a_{i-1})=1,~~\hbox{and $a_i$ is odd}.$$

We assume that $m$ is the largest index for which $a_m$ can be defined.
If $\gcd(a_m,k')=1$, then we have the edges
$$\{a_0,a_1+1\},\{a_1,a_2+1\},\ldots,\{a_{m-1},a_m+1\},\{a_m,k'\}$$
in $G$ and thus we are done by Lemma~\ref{lem:edges}.

Hence we assume that $\gcd(a_m,k')>1$. In what follows, for simplicity  we write $a$ for $a_m$.
First suppose that $k'-a\le32$. Since $k'-a+1$ is an odd integer $\ge3$, by Lemma~\ref{lem:2^r+p^s}, $k'-a+1=2^r+p^s$ for some $r\ge1$, $s\ge0$, and an odd prime $p$ with $p\nmid2^r-1$.
If $s=0$, then $k'=a+2^r$, and $a$ being odd implies that $\gcd(a,k')=1$ which is not the case. Hence $s\ge1$.
Note that we have either $\gcd(a,a+p^s)=1$ or $\gcd(k',a+2^r-1)=1$, since otherwise we have $\gcd(a,a+p^s)>1$ and so  $\gcd(a,p^s)>1$  which implies that $p\mid a$. Also we have $1<\gcd(k',a+2^r-1)\mid k'-(a+2^r-1)=p^s$ which in turn implies that $p\mid k'$.
It turns out that $p$ divides $k'-a=2^r+p^s-1$ and so $p\mid 2^r-1$, a contradiction.
Now, if $\gcd(a,a+p^s)=1$, then $a_{m+1}$ can be defined as $a_{m+1}=a+p^s-1$ which is not possible by our choice of $m$. Therefore,
  $\gcd(k',a+2^r-1)=1$. It follows that we have the edges
$$\{k,a_1+1\},\{a_1,a_2+1\},\ldots,\{a_{m-1},a+1\},\{a,a+2^r\},\{a+2^r-1,k'\}$$
in $G$ and again we are done by Lemma~\ref{lem:edges}.

Next,  suppose that $k'-a\ge33$. Note that for every odd prime $p<k'-a$, we have $p\mid a$ (since if there is an odd prime $p<k'-a$ with $p\nmid a$, then $a_{m+1}$ can be defined as $a_{m+1}=a+p-1$, a contradiction).
As $a$ is odd, by \eqref{eq:gcd} it is seen that all the integers
\begin{equation}\label{eq:a+2}
a+2,a+4,a+8,a+16,a+32
\end{equation}
 are coprime to $a$ and so they are coprime to every prime $p<k'-a$. It follows that any integer $b$ of the list \eqref{eq:a+2} is coprime to all positive integer less than $k'-a$.
 Now, if $a+1\le c\le k'$ and $c\ne b$, then $1\le|b-c|<k'-a$. So, in view of \eqref{eq:gcd}, $\gcd(b,c)=\gcd(b,|b-c|)=1$. Therefore,
  $b$ is coprime to all the integers in
 $\{a+1,a+2,\ldots,k'\}\setminus\{b\}$.
 Similarly, using \eqref{eq:gcd}, we see that $a+1$ is coprime to all  odd integers in the range $a,\ldots,k'$.
 It turns out that $G$ contains a path $P$ on the vertices
 $$ a,a+1,k',k'-1,k'-2,\ldots,a+33,a+2,a+3,a+32,a+31,\ldots,a+4.$$
 If $a=k$, the path $P$ together with the edge $\{a,a+4\}$ give rise to a Hamiltonian cycle of $G$.
 Otherwise, since $\gcd(a-1,a)=1$ and $5\mid a$, we have $5\nmid a-1$ and thus $\gcd(a-1,a+4)=1$. This shows that
 $P$ together with the path $a+4,a-1,a-2,\ldots,k+1,k$ give a Hamiltonian path of $G$. Since we have the edges
 $\{k,a_1+1\},\{a_1,a_2+1\},\ldots,\{a_{m-1},a_m\}$ in $G$, in view of Lemma~\ref{lem:edges}, it follows that $G$ is Hamiltonian.
}\end{proof}

By $\va(n)$, we denote the product of all odd primes less than $n$. This function has a role  in Hamiltonicity of
$\cpg(k,n)$.

\begin{thm}\label{thm:notHamilton} In the following cases, $\cpg(k,n)$ is not Hamiltonian:
\begin{itemize}
  \item[\rm(i)] $n$ odd and $k$ even;
  \item[\rm(ii)] $n$ even and either  $\va(n)\mid k$ or $\va(n)\mid k+n-1$.
\end{itemize}
\end{thm}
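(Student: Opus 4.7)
The approach rests on two observations: a parity-based independence-number bound that settles case (i) directly, and a degree-type obstruction that settles case (ii) once the hypothesis on $\va(n)$ is invoked. In both cases the key is that a Hamiltonian cycle is a spanning subgraph, so any independent set of $\cpg(k,n)$ is independent in that cycle as well.

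For case (i), when $k$ is even and $n$ is odd, exactly $(n+1)/2$ of the integers in $\{k,k+1,\ldots,k+n-1\}$ are even. Any two of these share the factor $2$ and hence form a non-edge of $\cpg(k,n)$, so $\alpha(\cpg(k,n))\ge(n+1)/2$. On the other hand, any Hamiltonian $n$-vertex graph contains $C_n$ as a spanning subgraph, and thus has independence number at most $\alpha(C_n)=\lfloor n/2\rfloor=(n-1)/2$. This contradiction rules out Hamiltonicity.

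For case (ii), I would assume that $n$ is even and, exploiting the symmetry $k\leftrightarrow k+n-1$, that $\va(n)\mid k$. The $n/2$ even vertices again form an independent set, now of size exactly $n/2=\alpha(C_n)$. A short structural fact about $C_n$ for $n$ even says that the only independent sets attaining this bound are the two alternating colour classes; hence in any Hamiltonian cycle of $\cpg(k,n)$ the vertices alternate in parity, so each vertex, and in particular $k$, must have two neighbours in $\cpg(k,n)$ of parity opposite to its own. I would then derive a contradiction by analysing $k$ directly: a vertex $k+j$ with $1\le j\le n-1$ has opposite parity to $k$ exactly when $j$ is odd, and the identity $\gcd(k,k+j)=\gcd(k,j)$ reduces coprimality to $\gcd(k,j)=1$. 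But every prime factor of such a $j$ is an odd prime less than $n$ and therefore divides $\va(n)$, hence divides $k$; thus $j=1$ is the only possibility, so $k+1$ is the unique opposite-parity neighbour of $k$. The symmetric argument applied to $k+n-1$ handles the alternative hypothesis $\va(n)\mid k+n-1$.

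The main obstacle will be the structural lemma that in $C_n$ with $n$ even the only maximum independent sets are the two alternating classes; this is however immediate by pigeonhole, since $n/2$ chosen vertices and $n/2$ omitted vertices arranged around a cycle with no two consecutive chosen vertices must alternate perfectly. Everything else is a short bookkeeping argument that converts the divisibility assumption into an upper bound on the number of integers in $\{1,\ldots,n-1\}$ coprime to $k$ of a given parity.
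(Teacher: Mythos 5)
Your proposal is correct and follows essentially the same route as the paper: the independence-number bound from the even vertices handles case (i), and in case (ii) the forced parity-alternation of any Hamiltonian cycle combined with $\gcd(k,k+j)=\gcd(k,j)$ shows that $k$ (or $k+n-1$) has $k+1$ (or $k+n-2$) as its only opposite-parity neighbour, a contradiction. The only difference is cosmetic: you make explicit the pigeonhole fact that the even vertices must form one of the two alternating classes of the cycle, which the paper leaves implicit.
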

\begin{proof}
 (i) If $n$ is odd and $k$ even, then $\cpg(k,n)$ has an independent set of size $(n+1)/2$ consisting of even vertices. Note if a graph with $n$ vertices has an independent set of size larger than $n/2$, then it cannot be Hamiltonian. So $\cpg(k,n)$ is not Hamiltonian in this case.

 (ii) Let $n$ be even. Then $G=\cpg(k,n)$  has an independent set of size $n/2$ consisting of even vertices.
 It follows that if $G$ is Hamiltonian, then in any Hamiltonian cycle of $G$, every edge should join two vertices with opposite parities. In particular,  any vertex of $G$ should have at least two neighbors with opposite parity.

 First assume that  $\va(n)\mid k$. We show that in this case, the vertex $k$ has only one neighbor with opposite parity, namely $k+1$, which in turn implies that $G$ is not Hamiltonian.
 To see this, let $\ell$ be a neighbor of $k$ with opposite parity and $k+1<\ell\le n+k-1$. So
$\ell-k$ is an odd integer with $3\le\ell-k\le n-1$. Hence, there is some odd prime $p$ such that $p\mid\ell-k$. As $p<n$, we have $p\mid\va(n)$ and thus $p\mid k$. It follows that $p\mid\ell$, too. So $k$ and $\ell$ cannot be adjacent, a contradiction.

In the case $\va(n)\mid k+n-1$,  with a similar proof as given above, we see that the vertex $k+n-1$ has only one neighbor with opposite parity. Thus, $G$ cannot be Hamiltonian.
\end{proof}

Here, we give another property of $\va(n)$  in connection with Hamiltonicity of
$\cpg(k,n)$.

\begin{lem}\label{lem:k+va(n)} Let $n\ge4$ be even. Then,
$v_1,\ldots,v_n,v_1$ is a Hamiltonian cycle of $\cpg(k,n)$ if and only if $v_1+\va(n),\ldots,v_n+\va(n),v_1+\va(n)$ is a Hamiltonian cycle of $\cpg(k+\va(n),n)$.
\end{lem}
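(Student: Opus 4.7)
The plan is to reduce the Hamiltonian-cycle equivalence to a purely arithmetic statement: for any two distinct integers $a,b$ in the window $\{k,\ldots,k+n-1\}$,
\[
\gcd(a,b)=1 \iff \gcd(a+\va(n),\,b+\va(n))=1.
\]
Since $v\mapsto v+\va(n)$ is clearly a bijection between the two vertex sets, once this biconditional is established, the cycle $v_1,\ldots,v_n,v_1$ uses only edges of $\cpg(k,n)$ exactly when its shift uses only edges of $\cpg(k+\va(n),n)$, giving the lemma.

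First I would record a parity remark. By definition $\va(n)$ is the product of \emph{odd} primes less than $n$, so $\va(n)$ is odd and the shift $v\mapsto v+\va(n)$ preserves parity. Since $n$ is even, each window contains exactly $n/2$ even and $n/2$ odd integers, and any two vertices of the same parity have $\gcd\ge 2$ and so are non-adjacent in \emph{both} graphs. Hence I only need to verify the biconditional when $a,b$ have opposite parities; then $b-a$ is an odd integer with $1\le |b-a|\le n-1$.

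Now the key step: because $b-a$ is odd, every prime $p$ dividing $b-a$ is an odd prime with $p\le |b-a|<n$, hence $p\mid\va(n)$. Using \eqref{eq:gcd}, I rewrite
\[
\gcd(a,b)=\gcd(a,b-a), \qquad \gcd(a+\va(n),b+\va(n))=\gcd(a+\va(n),b-a).
\]
A prime $p$ contributes to the first gcd iff $p\mid b-a$ and $p\mid a$, and to the second iff $p\mid b-a$ and $p\mid a+\va(n)$. For the relevant primes $p$ (those dividing $b-a$), we just showed $p\mid\va(n)$, so $p\mid a \iff p\mid a+\va(n)$. The two gcds therefore share the same prime divisors, giving the desired biconditional.

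The main obstacle, and the reason the hypothesis $n$ even is essential, is precisely the handling of the prime $2$: it can divide $b-a$ but does \emph{not} divide $\va(n)$, so the final step in the previous paragraph would fail without further thought. The parity observation in the second paragraph surgically removes this obstacle by ruling out same-parity (hence even-difference) pairs as edges of either graph before the gcd comparison is invoked. One minor routine check one should not forget is that the argument uses \eqref{eq:gcd} for nonzero integers; if $0$ lies in the window, then $\va(n)\ge 3$ is its image and one just verifies directly that the only neighbors of $0$ in $\cpg(k,n)$, namely $\pm 1$, correspond under the shift to $\va(n)\pm 1$, which are the neighbors forced on $\va(n)$ in any Hamiltonian cycle of $\cpg(k+\va(n),n)$ passing through it.
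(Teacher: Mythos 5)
Your proof is correct and follows essentially the same route as the paper: both reduce the statement to the biconditional $\gcd(a,b)=1\iff\gcd(a+\va(n),b+\va(n))=1$ for opposite-parity pairs in the window, using that every prime dividing the odd difference $b-a$ is an odd prime less than $n$ and hence divides $\va(n)$, while same-parity pairs are non-edges in both graphs. Your closing remark about $0$ is unnecessary, since your gcd argument already covers that case, but it does no harm.
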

\begin{proof} Since $n$ is even, as it is observed in the proof Theorem~\ref{thm:notHamilton}, in any Hamiltonian cycle of a coprime graph of order $n$, the ends of every edge have opposite parities.
 Let $a,b$ be two integers with opposite parities and $k\le a<b\le k+n-1$. We claim that $\gcd(a,b)=1$ if and only if $\gcd(a+\va(n),b+\va(n))=1$, from which the result follows.
 To see this, suppose that
$\gcd(a,b)=1$ and an odd prime $p$ divides $\gcd(a+\va(n),b+\va(n))$. Then $p\mid(b+\va(n)-a-\va(n))=b-a$. Since $b-a<n$, $p\mid\va(n)$ and thus $p$ should divide both $a,b$ and so $p=1$, a contradiction. Thus $\gcd(a+\va(n),b+\va(n))=1$. The other direction is similar.
\end{proof}

We need further properties of $\va(n)$.

\begin{lem}\label{lem:var}
 For every integer $n\ge6$,  $\va(n)\ge2n+1$.
\end{lem}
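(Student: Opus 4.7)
The plan is to dispose of the smallest values of $n$ by direct inspection and then invoke Bertrand's postulate to handle the rest. The guiding observation is that $\va(n)$ only changes by multiplying in a new odd prime whenever $n$ crosses such a prime, so the ratio $\va(n)/(2n+1)$ is at its worst just before such a jump; this is where the inequality is tightest and must be checked.

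First I would verify $n=6$ and $n=7$ directly, both of which give $\va(n)=3\cdot 5=15\ge 2n+1$. For $n\ge 8$, let $q$ denote the largest odd prime strictly less than $n$; then $q\ge 7$, and $\va(n)$ contains the three distinct prime factors $3$, $5$ and $q$, so $\va(n)\ge 15q$. Bertrand's postulate applied to $q$ produces a prime in the open interval $(q,2q)$; by maximality of $q$ no odd prime can lie in $(q,n)$, so $n$ is at most this next prime, and hence $n<2q$. Consequently $2n+1\le 4q-1$, which is trivially dominated by $15q$, so $\va(n)\ge 15q\ge 4q-1\ge 2n+1$.

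The only point requiring a bit of care is the bookkeeping around the definition of $q$: the threshold $n\ge 8$ is chosen precisely so that $q\ge 7$ and the three primes $3,5,q$ are genuinely distinct factors of $\va(n)$, while the residual range $n\in\{6,7\}$ (where instead $q=5$) is exactly what the base case covers. I do not foresee any real obstacle here; the main content of the argument is simply packaging Bertrand's postulate correctly so that it converts the sparse growth of $\va(n)$ (which only increases at prime values of $n$) into the needed linear lower bound.
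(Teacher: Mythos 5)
Your proof is correct. The base cases $n=6,7$ give $\va(n)=15\ge 2n+1$, and for $n\ge 8$ the largest odd prime $q<n$ satisfies $q\ge7$, so $\va(n)\ge 3\cdot5\cdot q=15q$; Bertrand's postulate yields a prime in $(q,2q)$ which, being larger than $q$, must be $\ge n$ by maximality of $q$, whence $n<2q$ and $2n+1\le 4q-1\le 15q\le\va(n)$. This is a genuinely different packaging from the paper's argument: the paper proves by induction on $i$ that $\va(2^i)>2^{i+2}$ for $i\ge3$ (using Bertrand to supply a new prime in each dyadic interval) and then sandwiches $n$ between consecutive powers of $2$. Both proofs ultimately rest on Bertrand's postulate, but yours applies it once, directly to the largest prime below $n$, and trades the induction for the crude factor $15$ coming from the primes $3$ and $5$; this makes your argument shorter and arguably cleaner, while the paper's dyadic induction is the sort of scaffolding that would more readily yield stronger exponential lower bounds on $\va(n)$ if they were needed. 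For the purposes of this lemma the two are interchangeable.
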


\begin{proof}{For $n=6,7$, the inequality holds: $\va(6)=\va(7)=15$.
 We first verify by induction that $\va(2^i)>
 2^{i+2}$ for $i\ge3$. For $i=3$, we have $\va(8)=3\cdot5\cdot7>32$. By the Bertrand's postulate,
  there is a prime $p$ with $2^i<p<2^{i+1}$. Therefore,  by induction we have
$$\va(2^{i+1})\ge p\cdot\va(2^i)>p\cdot2^{i+2}>2^{i+3}, \quad\hbox{for $i\ge3$.}$$
Now, for any integer $n\ge8$, choose $i\ge3$ such that $2^i\le n<2^{i+1}$. Then
$$\va(n)\ge\va(2^i)>2^{i+2}>2n+1.$$
}\end{proof}

We use the standard notation $\pi(n)$ to denote the number of primes less than $n$. 
\begin{lem}\label{lem:pi>t+3} Let $n\ge12$ be even, and $t$ be the number of prime factors of $n-1$. Then $\pi(n)\ge t+4$.
\end{lem}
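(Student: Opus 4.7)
The plan is to compare $\pi(n)$ to $t+4$ by exhibiting $t+4$ explicit primes below $n$, handling the small cases $t=1,2$ by direct verification and $t\ge 3$ by iterated Bertrand's postulate. Since $n$ is even, $n-1$ is odd, and the $t$ distinct prime factors of $n-1$ are all odd. Letting $q_1=3<q_2=5<q_3=7<\cdots$ denote the odd primes in increasing order, the product of the $t$ smallest odd primes bounds $n-1$ from below:
\[
n-1 \;\ge\; q_1q_2\cdots q_t.
\]

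For $t=1$ the hypothesis $n\ge 12$ alone gives $\pi(n)\ge\pi(12)=5$ (the five primes $2,3,5,7,11$), and for $t=2$ the above bound yields $n-1\ge 15$, so $n\ge 16$ and $\pi(n)\ge\pi(16)=6$. In both cases we match $t+4$ exactly at the extremal value of $n$.

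For $t\ge 3$ the strategy is to apply Bertrand's postulate three times in succession starting from $q_t$, producing $q_{t+1}\le 2q_t$, $q_{t+2}\le 4q_t$, and $q_{t+3}\le 8q_t$. Since $t\ge 3$ forces $q_1\cdots q_{t-1}\ge q_1q_2=15>8$, we obtain
\[
n-1 \;\ge\; q_1\cdots q_t \;\ge\; 8q_t \;\ge\; q_{t+3},
\]
and hence $q_{t+3}<n$. Consequently the $t+4$ primes $2,q_1,q_2,\ldots,q_{t+3}$ all lie below $n$, giving $\pi(n)\ge t+4$ as claimed.

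The only point requiring attention is that the iterated-Bertrand bound only becomes effective starting at $t=3$ (since the telescoped product $q_1q_2=15$ only just clears the threshold $8$), which is precisely why the cases $t=1,2$ must be settled by hand. Beyond that the argument is routine: three applications of Bertrand's postulate are enough, and no finer analytic input on $\pi$ is needed.
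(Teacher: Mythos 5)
Your proof is correct and rests on the same two ingredients as the paper's: the lower bound $n-1\ge$ (product of the $t$ smallest odd primes) together with Bertrand's postulate, with the cases $t=1,2$ checked numerically. The only difference is packaging — the paper runs an induction on $t$, gaining one prime per step via $\pi(p_1\cdots p_t)\ge\pi(p_1\cdots p_{t-1})+1$, whereas you telescope Bertrand three times to get $q_{t+3}<8q_t\le q_1\cdots q_t\le n-1$ and exhibit the $t+4$ primes $2,q_1,\ldots,q_{t+3}$ directly; this is arguably cleaner but not a genuinely different route.
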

\begin{proof} We proceed by induction on $t$. If $t=1$, we are done as
$\pi(n)\ge\pi(12)=5.$
If $t=2$, we have $n\ge16$ and thus $\pi(n)\ge\pi(16)=6$. If $t=3$, then  $\pi(n)\ge\pi(n-1)\ge\pi(2\cdot3\cdot5)>7.$
Let $p_1,p_2,\ldots$ be the sequence of primes. Then $\pi(p_1\cdots p_t)\ge\pi(p_1\cdots p_{t-1})+1$ because there is a prime  between $p_1\cdots p_{t-1}$ and $p_1\cdots p_t$ (as a consequence of the Bertrand's postulate).
Now, for $t\ge4$, if $n-1$ has $t$ prime factors, then  by the induction hypothesis,
 $$\pi(n)\ge\pi(n-1)\ge\pi(p_1\cdots p_t)\ge\pi(p_1\cdots p_{t-1})+1\ge t+4.$$
\end{proof}

\begin{lem}\label{lem:opp-parity} Let $k$ and $n\ge5$ be odd integers. Then in the graph $\cpg(k,n)$, at least one of $k$ or $k+n-1$ has at least two neighbors with opposite parity.
\end{lem}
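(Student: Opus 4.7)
My plan is to argue by contradiction: assume that both $k$ and $k':=k+n-1$ have at most one even neighbor in $G:=\cpg(k,n)$, and derive a contradiction using Lemma~\ref{lem:var}. The first step is to identify the even neighbors explicitly. Since $k$ is odd, the even vertices of $G$ are exactly $k+\ell$ with $\ell$ odd and $1\le\ell\le n-2$, and
\[
\gcd(k,k+\ell)=\gcd(k,\ell).
\]
The value $\ell=1$ always yields an even neighbor (namely $k+1$), so the assumption that $k$ has at most one even neighbor amounts to
\[
\gcd(k,\ell)>1\quad\text{for every odd }\ell\text{ with }3\le\ell\le n-2.
\]
An identical computation, using $\gcd(k',k'-\ell)=\gcd(k',\ell)$, shows that the analogous condition must hold for $k'$.

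Next, I would specialize these coprimality conditions to odd primes. For each odd prime $p$ with $3\le p\le n-2$, taking $\ell=p$ gives $p\mid k$ and $p\mid k'$, hence $p\mid k'-k=n-1$. Because $n$ is odd, the set of odd primes less than $n$ coincides with the set of odd primes $\le n-2$, so the product of all these primes, namely $\va(n)$, must divide $n-1$; in particular $\va(n)\le n-1$.

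Finally, I would invoke Lemma~\ref{lem:var}. For odd $n\ge 7$ we have $n\ge 6$, so $\va(n)\ge 2n+1>n-1$, contradicting $\va(n)\le n-1$. The only remaining case is $n=5$, which I would handle by direct inspection: $\va(5)=3$ does not divide $n-1=4$, again a contradiction. This completes the proof.

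The argument is essentially bookkeeping once the right characterization of even neighbors is in hand, and there is no serious obstacle; the only subtle point is to notice that since $n$ is odd the integer $n-1$ is even, ensuring that ``odd primes less than $n$'' really equals ``odd primes $\le n-2$,'' so that the lower bound of Lemma~\ref{lem:var} can be applied without loss.
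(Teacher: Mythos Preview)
Your proof is correct, and it takes a genuinely different route from the paper's. The paper proves Lemma~\ref{lem:opp-parity} by invoking Theorem~\ref{thm:n,k-odd}: since $\cpg(k,n)$ is Hamiltonian and the even vertices form an independent set of size $(n-1)/2$, any Hamiltonian cycle has exactly one odd--odd edge $e$; unless both $k$ and $k'$ lie on $e$ the conclusion is immediate, and the case $e=\{k,k'\}$ is then dispatched by an ad hoc argument using $\gcd(k,n-1)=1$. Your argument avoids Theorem~\ref{thm:n,k-odd} entirely: you translate ``$k$ has only one even neighbor'' into the divisibility condition $p\mid k$ for every odd prime $p\le n-2$, do the same for $k'$, subtract to get $\va(n)\mid n-1$, and finish with Lemma~\ref{lem:var}. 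This is more elementary and self-contained, since it rests only on the easy estimate $\va(n)\ge 2n+1$ rather than on the full Hamiltonicity result. The paper's approach, on the other hand, illustrates how the structure of a Hamiltonian cycle in $\cpg(k,n)$ forces parity information, which is thematically in line with how the lemma is later applied inside the proof of Theorem~\ref{thm:va(n)not|k}.
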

\begin{proof} The vertices $k$ and $k':=k+n-1$ have the neighbors $k+1$ and $k'-1$, respectively.
So it is enough to show that either of $k$ or $k'$ have some other neighbor with opposite parity.
 Since the first and the last vertex of $G:=\cpg(k,n)$ are odd, $G$ has $t$ even and $t+1$ odd vertices, for some $t\ge2$.
By Theorem~\ref{thm:n,k-odd}, $G$ has a Hamiltonian cycle $C$. Since even vertices form an independent set of $G$, $2t$ edges of $C$ are between even vertices and odd vertices. Hence $C$ has a unique edge $e$ whose ends are both odd. If either of $k$ or $k'$ is not on $e$, we are done.
Therefore, suppose that $e=\{k,k'\}$. So, by \eqref{eq:gcd}, $\gcd(k,n-1)=1$.
If $n-1$ has an odd factor $p$, then $k+p$ is adjacent with $k$ and we are done. Otherwise, $n-1$ is a power of $2$.
Therefore, we have either $3\nmid k$ or $3\nmid k'$  (since otherwise $3\mid(k'-k)=n-1$, a contradiction).
Hence, either $k$ is adjacent with $k+3$ or $k'$ is adjacent with $k'-3$, as desired.
\end{proof}

Finally,  we prove that if $n,k$ satisfy the condition (ii)  of Theorem~\ref{thm:main},  then  $\cpg(k,n)$ is Hamiltonian.
This together with Theorems~\ref{thm:n,k-odd} and \ref{thm:notHamilton} complete the proof of Theorem~\ref{thm:main}.

\begin{thm}\label{thm:va(n)not|k} Let $n\ge4$ be even. If $\va(n)\nmid k $ and $\va(n)\nmid k+n-1$, then  $\cpg(k,n)$ is Hamiltonian.
\end{thm}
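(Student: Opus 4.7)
The plan is to adapt the iterative proof of Theorem \ref{thm:n,k-odd} to $G:=\cpg(k,n)$, working with the full graph rather than a subgraph. Since $n$ is even, exactly one of $k$ and $k':=k+n-1$ is odd, and since the hypothesis $\va(n)\nmid k$ and $\va(n)\nmid k'$ is symmetric in $k$ and $k'$, we may assume $k$ is odd (and $k'$ is even); the case $k$ even follows from the analogous argument carried out with the roles of $k$ and $k'$ reversed.

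Beginning with the Hamiltonian path $k,k+1,\ldots,k'$ of $G$, set $a_0:=k$ and recursively choose odd $a_i$ with $a_{i-1}<a_i\le k'-1$ and $\gcd(a_i+1,a_{i-1})=1$; let $m$ be the largest index for which $a_m$ can be defined. If $\gcd(a_m,k')=1$, Lemma \ref{lem:edges} immediately produces a Hamiltonian cycle of $G$ and we are done. Otherwise the maximality of $m$ forces every odd prime $p\le k'-a_m$ to divide $a_m$ (else $a_{m+1}:=a_m+p-1$ would extend the sequence); in particular $a_m=k$ would imply $\va(n)\mid k$, contradicting the hypothesis, so necessarily $a_m>k$.

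I would then split on the size of $k'-a_m$ following the template of Theorem \ref{thm:n,k-odd}. In the small case, Lemma \ref{lem:2^r+p^s} applied to the odd integer $k'-a_m$ gives a decomposition $k'-a_m=2^r+p^s$, and one uses detour edges from $a_m$ to $k'$ through a carefully chosen intermediate vertex whose coprimality is controlled by this decomposition. In the large case (say $k'-a_m\ge 33$), the vertices $a_m+2,a_m+4,a_m+8,a_m+16,a_m+32$ are each coprime to every other vertex of $\{a_m+1,\ldots,k'\}$, including $k'$ itself (since $\gcd(a_m+2^j,k')=1$ for $1\le j\le 5$ by the same divisibility argument); this permits an explicit Hamiltonian path on $\{a_m,\ldots,k'\}$ routed through these universal vertices rather than through the edge $\{a_m+1,k'\}$ used in Theorem \ref{thm:n,k-odd}, and the Hamiltonian cycle of $G$ is then closed by extending to $\{k,\ldots,k'\}$ using $5\mid a_m$ and applying Lemma \ref{lem:edges}.

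The main obstacle I expect is the small-$(k'-a_m)$ branch in which the decomposition yields $s=0$, i.e., $k'-a_m=2^r+1$. In Theorem \ref{thm:n,k-odd} this was dispatched for free because the oddness of $k'$ implied $\gcd(a_m,k')=\gcd(a_m,2^r)=1$; here $k'$ is even and this free contradiction vanishes. Closing this branch will rely on $\va(n)\nmid k'$ through Lemma \ref{lem:opp-parity}, while the numerical bounds in Lemmas \ref{lem:var} and \ref{lem:pi>t+3}, combined with the normalization of $k$ afforded by Lemma \ref{lem:k+va(n)}, reduce all remaining cases to a finite list of small $n$ that can be verified by hand.
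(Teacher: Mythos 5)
Your proposal is a plan rather than a proof, and it breaks precisely at the points where the even case differs from Theorem~\ref{thm:n,k-odd}. The decisive structural fact (already used in the proof of Theorem~\ref{thm:notHamilton}) is that for even $n$ the $n/2$ even labels form an independent set of size exactly $n/2$, so \emph{every} edge of \emph{every} Hamiltonian cycle of $\cpg(k,n)$ joins an even and an odd vertex. With $k$ odd and $k'=k+n-1$ even, this has two fatal consequences for your plan. First, when $\gcd(a_m,k')>1$, no ``detour'' in the sense of Lemma~\ref{lem:edges} can rescue the path $k,k+1,\ldots,k'$: a chord scheme $\{a_m,x+1\},\ldots,\{y,k'\}$ forces $y$ to be odd and every intermediate $x$ to be odd with $\gcd(a_m,x+1)=1$ and $x\le k'-1$, which is exactly the condition that $a_{m+1}$ could be defined, contradicting the maximality of $m$; and an even intermediate vertex (the paper's choice $a+2^r-1$) would have to be joined to the even vertex $k'$. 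So the whole $2^r+p^s$ branch, not just the $s=0$ subcase you flag, is dead. Second, in the large case the edge $\{a_m+1,k'\}$ is even--even, and more fundamentally any Hamiltonian path on the interval $\{a_m,\ldots,k'\}$ (equally many even and odd elements, evens independent) cannot have two odd endpoints, so no such path can run from $a_m$ to $a_m+4$; and if it ends at an even vertex it cannot be continued to the even vertex $a_m-1$. The architecture ``path on $[a_m,k']$ glued to $a_m-1,\ldots,k$'' is therefore impossible, not merely in need of re-routing. A quick sanity check corroborates all this: your construction never uses $\va(n)\nmid k'$ in any concrete step, yet Theorem~\ref{thm:notHamilton} shows that hypothesis is necessary, so no completion of the plan that leaves it to a closing remark can be correct.

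For comparison, the paper abandons the greedy scheme entirely for even $n$. It first normalizes ($k$ modulo $\va(n)$ via Lemma~\ref{lem:k+va(n)}, and the reflection $x\mapsto -x$ to order the two endpoints' nearest opposite-parity neighbours), then proves by strong induction on $n$ the stronger statement that $\cpg(k,n)$ has a \emph{special} Hamiltonian cycle, i.e.\ one containing both $\{k,k+1\}$ and $\{k'-1,k'\}$; this strengthening is what allows shorter coprime intervals to be spliced together along consecutive-integer edges. The induction splits on the relative position of $\ell=k+\bar p$ and $\ell'=k'-\bar p'$, uses Lemmas~\ref{lem:var}, \ref{lem:pi>t+3} and \ref{lem:opp-parity} to produce a second usable neighbour at one end, and checks $n\le 10$ by hand. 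If you want to keep a constructive flavour, you would need to rebuild your argument so that every chord respects the forced parity alternation and so that the hypothesis on $k'$ enters exactly where the sequence $a_0,\ldots,a_m$ stalls; the paper's ``special cycle'' induction is one way to do that.
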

\begin{proof} Let $k':=k+n-1$ and $G:=\cpg(k,n)$.
Since $\va(n)\nmid k$, there is an odd prime $p<n$ such that $p\nmid k$. It follows that $k+p$ is adjacent to $k$ in $G$.
Let $\bar p$ be the smallest such prime and $\ell:=k+\bar p$.
Indeed, $\ell$ is the smallest neighbor of $k$ with opposite parity other than $k+1$.
Similarly, we can define $\bar p'$ and $\ell':=k'-\bar p '$ as the largest neighbor of $k'$ with opposite parity other than $k'-1$.
 We may assume that
\begin{equation}\label{eq:l-k>l'-k'}
\bar p'\le\bar p\quad\hbox{or equivalently}\quad k'-\ell'\le\ell-k,
\end{equation}
since otherwise we can consider $\cpg(-k',n)$ instead which is isomorphic to $G$.
Furthermore, we may assume that $k'$ is odd, otherwise, by Lemma~\ref{lem:k+va(n)}, we can consider the graph $\cpg(k+\va(n),n)$ in which the last vertex, i.e. $k'+\va(n)$ is odd.

 By induction, we prove the stronger statement that $G=\cpg(k,n)$ contains a Hamiltonian cycle including the edges $\{k,k+1\}$ and $\{k'-1,k'\}$.
 We call such a Hamiltonian cycle {\em special}.
At first, we need to prove the statement for $n\le10$.
\begin{itemize}
\item[$n=4$:]  Since $3=\va(4)\nmid k$, we have the special Hamiltonian cycle $k,k+1,k+2,k+3,k$ in $G$.
\item[$n=6$:] We have $3\cdot5=\va(6)\nmid k$ and $\va(6)\nmid k'= k+5$ . If $5\nmid k$, then we have the special Hamiltonian cycle $k,\ldots,k+5,k$ in $G$.
If $5\mid k$, then necessarily $3\nmid k$, $3\nmid k'$ and thus $k,k+1,k+2,k+5,k+4,k+3,k$ is a special Hamiltonian cycle.

\item[$n=8$:] We have $3\cdot5\cdot7=\va(8)\nmid k$ and $\va(8)\nmid k'=k+7$. If $7\nmid k$, then $k,\ldots,k+7,k$ is a special Hamiltonian cycle of $G$. So let $7\mid k$. Note that $3$ cannot divide both $k,k'$ as $k'-k=7$. By \eqref{eq:l-k>l'-k'}, we may assume that $3\nmid k'$. If further
$5\nmid k$, then we have the special Hamiltonian cycle
$k,k+1,k+2,k+3,k+4,k+7,k+6,k+5,k.$  If $5\mid k$, then we have have necessarily $3\nmid  k$, $5\nmid k'$ and thus $k,k+3,k+4,k+5,k+6,k+7,k+2,k+1,k$ is a special Hamiltonian cycle .

\item[$n=10$:]  We have $3\cdot5\cdot7=\va(10)\nmid k$ and $\va(10)\nmid k'=k+9$. If $3\nmid k$, then we have the special Hamiltonian cycle $k,\ldots,k+9,k$ in $G$. So we assume that $3\mid k$ (and so $3\mid k'$). Also, $5$ cannot divide both $k,k'$ and again by \eqref{eq:l-k>l'-k'},
we can assume that $5\nmid k'$. If further $5\nmid k$, then we have the special Hamiltonian cycle $k,k+1,k+2,k+3,k+4,k+9,k+8,k+7,k+6,k+5,k$. If $5\mid k$, since $3\mid k$, too, then necessarily $7\nmid k$.
If further $7\nmid k+2$, then we have the  special Hamiltonian cycle
$k,k+1,k+2,k+9,k+8,k+3,k+4,k+5,k+6,k+7,k$.
If $7\mid k+2$, since we already have $15\mid k$, then $k\equiv75\pmod{105}$.
By Lemma~\ref{lem:k+va(n)}, we only need to consider $1\le k\le\va(10)=105$,
  so we may assume that $k=75$ in which case
 $75,76,81,80,77,78,83,84,79,82,75$ is a special Hamiltonian cycle.
\end{itemize}

In what follows, we assume that $n\ge12$. We consider two cases.

\noindent{\bf Case 1.} $\ell'>\ell$.

Let $n'=\ell'-k+2$ which is even as $k\equiv\ell'\pmod2$.

First, assume that $\va(n')\nmid\ell'+1$. Since $k$ has a neighbor with opposite parity between $k+1$ and $\ell'+1$, we have
$\va(n')\nmid k$ (if $\va(n')\mid k$, then $\gcd(k,k+r)>1$ for every odd $r$, $1<r<n'$).
 Hence $\cpg(k,n')$ satisfies the induction hypothesis and so it has a special Hamiltonian cycle.
 In particular, $\cpg(k,n')$ has a Hamiltonian path $P$ between $\ell'$ and $\ell'+1$, where $P$ includes the edge $\{k,k+1\}$.
Now, if we let $P'$ be the path $\ell'+1,\ell'+2,\ldots,k'-1,k',\ell'$, then $P\cup P'$ gives rise to a Hamiltonian cycle of $G$  including both the edge $\{k,k+1\}$ and  $\{k'-1,k'\}$.

Next, assume that $\va(n')\mid\ell'+1$. Consider $\bar p'=k'-\ell'$ which is an odd prime.
We claim that $\bar p'=3$. For a contradiction, assume that $\bar p'\ge5$. Consider the graph $G'':=\cpg(\ell'+1,\bar p')$.
In $G''$ the smallest vertex $\ell'+1$ is odd and the largest vertex $k'$ has only one neighbor with opposite parity, namely $k'-1$.
 Thus, by Lemma~\ref{lem:opp-parity}, $\ell'+1$ has at least one neighbor with opposite parity other than $\ell'+2$. Suppose $\ell'+1+r$ is this neighbor, i.e.
 $\gcd(\ell'+1,\ell'+1+r)=1$.  Here $r$ should be an odd integer $\ge3$ and further  by \eqref{eq:l-k>l'-k'},
  $$r<k'-\ell'\le\ell-k<\ell'-k<n'.$$
  We may assume that $r$ is a prime since otherwise $r$ can be replaced by any of its prime factors.
  Thus $r\mid\va(n')$ and so $r\mid\ell'+1$. This yields  $r\mid\ell'+1+r$. By this contradiction, the claim follows, that is $\bar p'=3$. Therefore, $\ell'+1=k'-2$. So $\va(n')\mid k'-2$.
It turns out that for any prime $p<n'$, we have $p\nmid k'$.
Given that $$k'-(\ell-1)=\ell'-\ell+4\le\ell'-(k+3)+4<n',$$ it follows that $\gcd(\ell-1,k')=1$, and thus we have the following special Hamiltonian cycle in $G$:
$$k,k+1,\ldots,\ell-1,k',k'-1,\ldots,\ell,k.$$

\noindent{\bf Case 2.} $\ell'<\ell$ (having opposite parities, $\ell=\ell'$ is not possible).

We claim that if $k,k'$ have neighbors $a,a'$, respectively, with $a\not\equiv a'\pmod 2$ and $a-a'\ge5$, then $G$ has a special Hamiltonian cycle.
To see this, let $m:=a-a'+1$ which is an even integer $\ge6$.  We observe that $\va(m)$ does not divide either both of $a+1,a'+1$, or both of
$a-1,  a'-1$ (if this does not hold,  $\va(m)$ should divide at least one of the integers $2, m-3, m-1, m+1$ which is a contradiction in view of Lemma~\ref{lem:var}).
If $\va(m)\nmid a+1,a'+1$, then from the induction hypothesis, it follows that the graph $\cpg(a'+1,m)$ contains a Hamiltonian path $P$ between $a$ and $a+1$.
This together with the path
$$a+1,a+2,\ldots,k'-1,k',a',a'-1,\ldots,k+1,k,a,$$
 give rise to a special Hamiltonian cycle in $G$.
 If $\va(m)\nmid a-1,a'-1$, then from the induction hypothesis, it follows that the graph $\cpg(a'-1,m)$ contains a Hamiltonian path $P$ between $a'$ and $a'-1$.
This together with the path
$$a'-1,a'-2,\ldots,k,a,a+1,\ldots,k'-1,k',a'$$
 give rise to a special Hamiltonian cycle in $G$
  and thus the claim follows.

Now, if $\ell'=\ell-1$, then $G$ contains
the following special Hamiltonian cycle:
$$k,\ell,\ell+1,\ldots,k'-1,k',\ell',\ell'-1,\ldots,k+1,k.$$
 So we assume that $\ell-\ell'\ge3$.  Let $t$ be the number of prime factors of $n-1$. Since $n\ge12$,
by Lemma~\ref{lem:pi>t+3}, $\pi(n)\ge t+4$. Hence the is a prime $q<n$ such that $q\nmid n-1$ and $q\not\in\{2,\bar p,\bar p'\}$.
As $k'-k=n-1$, it follows that either $q\nmid k$ or $q\nmid k'$. If $q\nmid k$, then $a=k+q$ is a neighbor of $k$ with opposite parity other than $\ell=k+\bar p$. By the definition of $\ell$, we must have
$a\ge\ell+2$. Therefore, $a-\ell'\ge5$ and we are done in view of the above claim.  If $q\nmid k'$, we are done in a similar manner.
\end{proof}

\section*{Acknowledgements}
The second author carried this work during a Humboldt Research Fellowship at the University of Hamburg. He thanks the Alexander von Humboldt-Stiftung for financial support.

{}

\end{document}